\newcommand{\old}[1]{}
\newtheorem{theorem}{Theorem}
\newtheorem{lemma}[theorem]{Lemma}
\newtheorem{corollary}[theorem]{Corollary}
\theoremstyle{remark}
\theoremstyle{definition}
\DeclareSymbolFont{AMSb}{U}{msb}{m}{n}
\DeclareMathSymbol{\C}{\mathbin}{AMSb}{"43} 
\DeclareMathSymbol{\EE}{\mathbin}{AMSb}{"45} 
\DeclareMathSymbol{\N}{\mathbin}{AMSb}{"4E} 
\DeclareMathSymbol{\PP}{\mathbin}{AMSb}{"50} 
\DeclareMathSymbol{\Q}{\mathbin}{AMSb}{"51} 
\DeclareMathSymbol{\R}{\mathbin}{AMSb}{"52} 
\DeclareMathSymbol{\Z}{\mathbin}{AMSb}{"5A}
\begin{document}

\title[The looping constant of $\Z^d$]{The looping constant of $\Z^d$}

\author{Lionel Levine}
\address{Department of Mathematics, Cornell University, Ithaca, NY 14853. \url{http://www.math.cornell.edu/~levine}}

\author{Yuval Peres}
\address{Theory Group, Microsoft Research, 1 Microsoft Way, Redmond, WA 98052. \url{http://research.microsoft.com/en-us/um/people/peres/}}

\date{July 16, 2012}
\keywords{abelian sandpile model, loop-erased random walk, uniform spanning tree, uniform spanning unicycle, wired uniform spanning forest}
\thanks{The first author was supported by NSF MSPRF 0803064 and NSF DMS 1105960.}
\subjclass[2010]{60G50, 82B20}
 
\begin{abstract}
The \emph{looping constant} $\xi(\Z^d)$ is the expected number of neighbors of the origin that lie on the infinite loop-erased random walk
 in $\Z^d$.  Poghosyan, Priezzhev and Ruelle, and independently, Kenyon and Wilson, proved recently that $\xi(\Z^2)=\frac54$.
We consider the infinite volume limits as $G \uparrow \Z^d$ of three different statistics: (1) The expected length of the cycle in a uniform spanning unicycle of $G$; (2) The expected density of a uniform recurrent state of the abelian sandpile model on $G$; and (3) The ratio of the number of spanning unicycles of $G$ to the number of rooted spanning trees of $G$. 
  We show that all three limits are rational functions of the looping constant $\xi(\Z^d)$.  In the case of $\Z^2$ their respective values are 
$8$, $\frac{17}{8}$ and $\frac18$.
\end{abstract}

\maketitle

Fix an integer $d\geq 2$, and let $\xi=\xi(\Z^d)$ be the expected number of neighbors of the origin on the infinite loop-erased random walk in $\Z^d$ (defined in section~\ref{sec:limit}). 
We call this number $\xi$ the \emph{looping constant} of $\Z^d$. 
We explore several statistics of $\Z^d$ that can be expressed as rational functions of $\xi$.
Recently Poghosyan, Priezzhev, and Ruelle \cite{PPR11} and Kenyon and Wilson \cite{KW11} independently proved $\xi(\Z^2) = \frac54$, which implies that all of these statistics have rational values for $\Z^2$.

A \emph{unicycle} is a connected graph with the same number of edges as vertices.  Such a graph has exactly one cycle (Figure~\ref{fig:unicycle}).  If $G$ is a finite (multi)graph, a \emph{spanning subgraph} of $G$ is a graph containing all of the vertices of $G$ and a subset of the edges.  A \emph{uniform spanning unicycle} (USU) of $G$ is a spanning subgraph of $G$ which is a unicycle, selected uniformly at random.  

We regard the $d$-dimensional integer lattice $\Z^d$ as a graph with the usual nearest-neighbor adjacencies: $x \sim y$ if and only if $x-y \in \{ \pm \mathbf{e}_1,\ldots, \pm \mathbf{e}_d\}$, where the $\mathbf{e}_i$ are the standard basis vectors. 
An \emph{exhaustion} of $\Z^d$ is a sequence $V_1 \subset V_2 \subset \cdots$ of finite subsets such that $\bigcup_{n \geq 1} V_n = \Z^d$.  
Let $G_n$ be the multigraph obtained from $\Z^d$ by collapsing $V_n^c$ to a single vertex $s_n$, and removing self-loops at $s_n$.   We do not collapse edges, so $G_n$ may have edges of multiplicity greater than one incident to $s_n$.  Theorem~\ref{thm:main}, below, gives a numerical relationship between the looping constant $\xi$ and the expectation  
	\[ \lambda_n = \EE \left[ \mbox{length of the unique cycle in a USU of $G_n$} \right]. \]

The second statistic related to $\xi$ arises from the Bak-Tang-Wiesenfeld abelian sandpile model \cite{BTW87}.   A \emph{sandpile} on a finite subset $V$ of $\Z^d$ is a collection of indistinguishable particles on each element of $V$, represented by a function $\sigma: V \to \Z_{\geq 0}$ indicating how many particles are present at each $v \in V$.  A vertex containing at least $2d$ particles can \emph{topple} by sending one particle to each of its $2d$ neighbors; particles that exit $V$ are lost.  The sandpile is called \emph{stable} if no vertex can topple, that is, $\sigma(v) \leq 2d-1$ for all $v \in V$.  We define a Markov chain on the set of stable sandpiles as follows: at each time step, add one particle at a vertex of $V$ selected uniformly at random, and then perform all possible topplings until the sandpile is stable.  Dhar~\cite{Dhar90} showed that this stable configuration depends only on the initial configuration and not on the order of toppings, and that the stationary distribution of the Markov chain is uniform on the set of recurrent states.

Theorem~\ref{thm:main} gives a numerical relationship between the looping constant $\xi$ and the expectation
	\[ \zeta_n = \EE \left[\mbox{number of particles at the origin in a stationary sandpile on $V_n$} \right]. \]

To define the last quantity of interest, recall that the \emph{Tutte polynomial} of a finite (multi)graph $G=(V,E)$ is the two-variable polynomial
	\[ T(x,y) = \sum_{A \subset E} (x-1)^{c(A)-1} (y-1)^{c(A) + \#A - n} \]
where $c(A)$ is the number of connected components of the spanning subgraph $(V,A)$.  Let $T_n(x,y)$ be the Tutte polynomial of $G_n$, and let
	\[ \tau_n = \frac{ \frac{\partial T_n}{\partial y}(1,1)  }{(\#V_n) T_n(1,1)}. \]
A combinatorial interpretation of $\tau_n$ is the number of spanning unicycles of $G_n$ divided by the number of rooted spanning trees of $G_n$.

For a finite set $V \subset \Z^d$, write $\partial V$ for the set of sites in $V^c$ adjacent to $V$.   		Denote the origin in $\Z^d$ by $o$.  Let us say that $V_1 \subset V_2 \subset \ldots $ is a \emph{standard exhaustion} if $V_1 = \{o\}$, $\# V_n = n$, and $\# (\partial V_n) / n \to 0$ as $n \to \infty$.

\begin{theorem}
\label{thm:main}
For any standard exhaustion of $\Z^d$, the following limits exist:
	\[ \tau =  \lim_{n \to \infty} \tau_n, \qquad
	   \zeta = \lim_{n \to \infty} \zeta_n, \qquad
	   \lambda = \lim_{n \to \infty} \lambda_n. \]
Their values are given in terms of the looping constant $\xi = \xi(\Z^d)$ by
	\begin{equation} \label{eq:theformulas} \tau = \frac{\xi-1}{2}, \qquad \zeta = d + \frac{\xi-1}{2}, \qquad \lambda = \frac{2d-2}{\xi-1}. \end{equation}
\end{theorem}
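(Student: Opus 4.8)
The plan is to reduce all three limits to a single identity relating a unicycle statistic of $G_n$ to the \emph{finite-volume looping constant}
\[
\xi_n=\EE\big[\#\{\text{neighbors of }o\text{ on the loop-erased random walk from }o\text{ to }s_n\text{ in }G_n\}\big],
\]
which by Wilson's algorithm equals the expected number of neighbors of $o$ on the path from $o$ to $s_n$ in the wired uniform spanning tree of $G_n$, and which tends to $\xi$ as the exhaustion grows by the local convergence of that tree to the wired spanning forest of $\Z^d$ used to define $\xi$ in Section~\ref{sec:limit} (no integrability issue, the count being at most $2d$). I would prove the three numerical relations
\[
\lambda_n\tau_n=\beta_n/\#V_n,\qquad \zeta_n=d+\tau_n+o(1),\qquad \tau_n=\tfrac12(\xi_n-1)+o(1),
\]
with $\beta_n=\#E(G_n)-\#V(G_n)+1$, and then pass to the limit.

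The first relation is purely combinatorial. The map $(U,e)\mapsto(U\setminus e,\,e)$ that sends a spanning unicycle $U$ and an edge $e$ of its cycle to the resulting (spanning tree, non-tree edge) pair is a bijection, so $\sum_U\ell(U)=\beta_n r_n$, where $\ell(U)$ is the cycle length and $r_n=T_n(1,1)$ is the number of spanning trees; since $\partial_yT_n(1,1)$ equals the number of connected spanning subgraphs of $G_n$ with $\#V(G_n)$ edges, i.e.\ the number of spanning unicycles, the definition of $\tau_n$ makes that number $\#V_n\,\tau_n\,r_n$, whence $\lambda_n=\beta_n/(\#V_n\,\tau_n)$. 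A degree count gives $\#E(G_n)=d\,\#V_n+\tfrac12\deg_{G_n}(s_n)$, so $\beta_n/\#V_n=(d-1)+\deg_{G_n}(s_n)/(2\#V_n)\to d-1$, since $\deg_{G_n}(s_n)\le 2d\,\#\partial V_n=o(\#V_n)$ for a standard exhaustion. The second relation comes from the theorem of Merino López identifying $T_n(1,y)$ with the level generating function of recurrent sandpile configurations; differentiating at $y=1$, and pinning the normalization by the fact that the maximal configuration $\sigma\equiv\deg_{G_n}(\cdot)-1$ has maximal level $\beta_n$, gives $\EE\big[\sum_{v\in V_n}\sigma(v)\big]=\#V_n\,\tau_n+d\,\#V_n-\tfrac12\deg_{G_n}(s_n)$. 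Dividing by $\#V_n$ and using that the sandpile measures on $V_n$ converge locally to the translation-invariant sandpile measure on $\Z^d$ --- so $\EE[\sigma(v)]\to\zeta$ for every $v$ outside an $o(\#V_n)$-sized boundary layer, the heights being bounded --- gives $\zeta_n=\tau_n+d+o(1)$, and in particular that $\zeta=\lim_n\zeta_n$ exists.

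The third relation is the crux. I would sample the uniform spanning unicycle of $G_n$ by a Wilson-type procedure in which the cycle itself is produced by loop-erasure: pick a base vertex and an incident edge, run a loop-erased walk until it returns to close up a cycle, then grow the remaining trees off that cycle by ordinary Wilson steps. Reading the number of spanning unicycles --- equivalently, the expected cycle length $\lambda_n$ --- off this construction expresses $\tau_n$ through hitting data of that loop-erased walk; the quantity that survives should be the expected number of neighbors of the base vertex on the walk, less its first step, with an overall factor $\tfrac12$ that I expect to come from the two traversal directions of the cycle. Carrying out this bookkeeping correctly at finite $n$, so that the resulting cycle statistic equals $\tfrac12(\xi_n-1)$ up to a negligible boundary error --- the ``$-1$'' and the ``$\tfrac12$'' included --- is, I expect, the principal obstacle; everything else is routine.

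Combining the three relations with $\xi_n\to\xi$ gives $\tau=\lim_n\tau_n=\tfrac12(\xi-1)$, whence $\lambda=(d-1)/\tau=(2d-2)/(\xi-1)$ and $\zeta=d+\tau=d+\tfrac12(\xi-1)$; the existence of all three limits follows from the same convergence inputs.
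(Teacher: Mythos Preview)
Your first two relations --- the tree/unicycle bijection giving $\lambda_n\tau_n=\beta_n/\#V_n$, and the Merino L\'opez identity combined with Athreya--J\'arai local-to-global convergence giving $\zeta=d+\tau$ --- are exactly what the paper does (its equation~(\ref{uoverkappa}) and Corollary~\ref{tuttepartial} together with Lemma~\ref{lem:localglobal}). So the architecture is right: everything reduces to one link between the spanning-tree side and $\xi$.

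The gap is your third relation. You propose to sample the uniform spanning unicycle by a ``Wilson-type procedure'' that grows the cycle by loop-erasure and then read $\tau_n$ off as $\tfrac12(\xi_n-1)$, but you do not carry this out, and you yourself flag the bookkeeping of the ``$-1$'' and the ``$\tfrac12$'' as the principal obstacle. As stated this is not a proof; there is no standard Wilson algorithm that outputs a \emph{uniform} spanning unicycle together with a marked vertex on the cycle identified with the origin, and it is not clear what survives once you correct for the bias by cycle length and for the base-vertex choice.

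The paper bypasses this entirely. Instead of linking $\tau_n$ to $\xi_n$ through unicycles, it links $\zeta_n$ to $\xi_n$ through the Majumdar--Dhar burning bijection at the single vertex $o$: if $D_n(o)$ is the number of neighbors of $o$ that are \emph{descendants} of $o$ in the wired UST of $G_n$, then conditionally on $D_n(o)=j$ the sandpile height $\sigma_n(o)$ is uniform on $\{j,j+1,\ldots,2d-1\}$ (this is a short combinatorial lemma about how the burning map behaves when you vary only the edge out of $o$). Taking expectations gives the exact identity
\[
\EE\,\sigma_n(o)\;=\;\frac{\EE\,D_n(o)+2d-1}{2},
\]
with no error term. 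The paper then shows $\EE D_n(o)\to\xi$; note this is the \emph{descendant} count, not your ancestor-based $\xi_n$, and the passage to $\xi$ uses the one-end property of the WUSF and translation invariance (the event ``$x$ is a descendant of $o$'' is not a cylinder event, so weak convergence alone does not suffice). This yields $\zeta=(\xi+2d-1)/2$ directly, and your relations~1 and~2 finish the job. The moral: route the $\xi$-dependence through the sandpile height at the origin via burning, not through the unicycle.
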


The case of the square grid $\Z^2$ is of particular interest because the quantities $\xi, \tau,\zeta,\lambda$ rather mysteriously come out to be rational numbers.

\begin{corollary} 
\label{conj:5/4}
In the case $d=2$, we have \cite{KW11,PPR11}
	\[ \xi = \frac54 \quad \mbox{and} \quad \zeta = \frac{17}{8}. \]
Hence by Theorem~\ref{thm:main},
	\[ \tau = \frac18 \quad \mbox{and} \quad \lambda = 8. \]
\end{corollary}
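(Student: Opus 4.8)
The plan is to obtain all four numerical values as an immediate consequence of the single external input $\xi(\Z^2) = \frac54$, proved in \cite{KW11,PPR11}, together with the formulas \eqref{eq:theformulas} of Theorem~\ref{thm:main} specialized to $d=2$. No further combinatorics or probability is required: the corollary is a pure substitution into an already-established theorem, so the proof will be very short.

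First I would record the cited value $\xi = \frac54$ and then substitute $d=2$ and $\xi = \frac54$ into the three identities of \eqref{eq:theformulas}. This gives $\tau = (\xi-1)/2 = (\frac54 - 1)/2 = \frac18$; next $\zeta = d + (\xi-1)/2 = 2 + \frac18 = \frac{17}{8}$; and finally, using $\xi - 1 = \frac14 \neq 0$, $\lambda = (2d-2)/(\xi-1) = 2 / \frac14 = 8$. Each of these is a single line of arithmetic, which is all that Theorem~\ref{thm:main} leaves to be done once $\xi$ is known.

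I would then remark on the internal consistency that the corollary draws attention to. The value $\zeta = \frac{17}{8}$ is cited in \cite{PPR11} as an independently computed sandpile density, yet it is exactly what Theorem~\ref{thm:main} forces from $\xi = \frac54$ via $\zeta = d + (\xi-1)/2$. Hence the two imported constants are not logically independent: either one determines the other through the theorem, and the only genuinely external fact needed here is the identity $\xi = \frac54$. Since every step is a substitution into a formula proved earlier, there is no substantive obstacle to overcome; the one point worth stating with care is that the expression for $\lambda$ divides by $\xi - 1$, which is legitimate precisely because $\xi = \frac54 > 1$.
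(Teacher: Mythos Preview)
Your proposal is correct and matches the paper's own treatment: the corollary is stated without a separate proof, since the values of $\tau$ and $\lambda$ follow immediately by substituting $d=2$ and the cited value $\xi=\frac54$ into the formulas of Theorem~\ref{thm:main}. Your observation that $\zeta=\frac{17}{8}$ is already forced by $\xi=\frac54$ via the theorem (so only one external input is genuinely needed) is a nice point the paper also makes in the surrounding discussion.
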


\begin{figure}
\centering
\includegraphics[height=0.35\textheight]{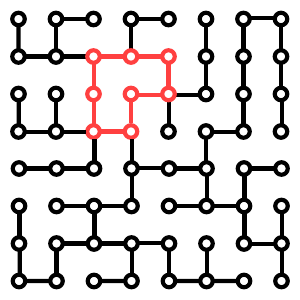} 
\caption{A spanning unicycle of the $8\times 8$ square grid.  The unique cycle is shown in red.}
\label{fig:unicycle}
\end{figure}

The value $\zeta(\Z^2) = \frac{17}{8}$ was conjectured by Grassberger in the mid-1990's \cite{Dhar06}.  Poghosyan and Priezzhev \cite{PP11} observed the equivalence of this conjecture with $\xi(\Z^2)=\frac54$, and shortly thereafter the proofs \cite{PPR11,KW11} of these two equalities appeared.  Our main contribution is showing that $\tau$, $\zeta$ and $\lambda$ are rational functions of $\xi$ in all dimensions $d \geq 2$, and consequently, the calculation of $\tau(\Z^2)$ and $\lambda(\Z^2)$.  As we will see, the proof of Theorem~\ref{thm:main} is readily assembled from known ingredients in the literature. However, we think there is some value in collecting these ingredients in one place.


%
The proof that $\zeta(\Z^2) = \frac{17}{8}$ in \cite{KW11} uses Kenyon's theory of vector bundle Laplacians~\cite{Kenyon}, while the proof in \cite{PPR11} uses monomer-dimer calculations.  Both proofs 
involve powers of $1/\pi$ which ultimately cancel out.  For $i=0,1,2,3$ let $p_i$ be the probability that a uniform recurrent sandpile in~$\Z^2$ has exactly~$i$ grains of sand at the origin.  The proof of the distribution
\begin{align*}
p_0 &= \frac{2}{\pi^2} - \frac{4}{\pi^3} \\
p_1 &= \frac14 - \frac{1}{2\pi} - \frac{3}{\pi^2} + \frac{12}{\pi^3} \\
p_2 &= \frac38 + \frac{1}{\pi} - \frac{12}{\pi^3} \\
p_3 &= \frac38 - \frac{1}{2\pi} + \frac{1}{\pi^2} + \frac{4}{\pi^3}
\end{align*}
\old{
Majumdar and Dhar~\cite{MD91} showed that
	\[ p_0 = \frac{2}{\pi^2} - \frac{4}{\pi^3}. \] 
Jeng, Piroux and Ruelle~\cite{JPR}, building on work of Priezzhev~\cite{Priezzhev}, showed that $p_1$ and $p_2$ are related by
	\[ (\pi - 8)p_1 + 2(\pi -2)p_2 = \pi - 2 - \frac{3}{\pi} + \frac{12}{\pi^2} - \frac{48}{\pi^3}. \]
Since $p_0 + p_1 + p_2 + p_3 = 1$, only one more relation is needed to determine the distribution of $\sigma(o)$.  
}
is completed in \cite{PPR11,KW11}, following work of \cite{MD91,Priezzhev,JPR}.  In particular, $\zeta(\Z^2) = p_1 + 2p_2 + 3p_3 = \frac{17}{8}$.

The objects we study on finite subgraphs of $\Z^d$ also have ``infinite-volume limits'' defined on $\Z^d$ itself: Lawler~\cite{Lawler80} defined the infinite loop-erased random walk, Pemantle~\cite{Pem91} defined the uniform spanning tree in~$\Z^d$, and Athreya and J\'{a}rai~\cite{AJ} defined the infinite-volume stationary measure for sandpiles in~$\Z^d$.  As for the Tutte polynomial, the limit
	\[ t(x,y) := \lim_{n \to \infty} \frac1n \log T_n(x,y) \] 
can be expressed in terms of the pressure of the Fortuin-Kasteleyn random cluster model with parameters $p=1-\frac{1}{y}$ and $q=(x-1)(y-1)$.  By a theorem of Grimmett (see \cite[Theorem 4.58]{Grimmett}) this limit exists for all real $x,y>1$.  Theorem~\ref{thm:main} concerns the behavior of this limit as $(x,y) \to (1,1)$; indeed, another expression for $\tau_n$ is
	\[ \tau_n =  \frac{\partial}{\partial y} \left.\left[\frac1n \log T_n(x,y) \right] \right|_{x=y=1}. \]

Sections~\ref{sec:limit},~\ref{sec:burning} and~\ref{sec:tutte} collect the results from the literature that we will use.  These are 1.\ The existence of the infinite loop-erased random walk measure, due to Lawler; 2.\ Wilson's algorithm relating loop-erased random walk to paths in the wired uniform spanning forest;  3. the fact that each component of the wired uniform spanning forest has one end, proved by Benjamini, Lyons, Peres and Schramm;
4.\ the burning bijection of Majumdar and Dhar between spanning trees and recurrent sandpiles; 5.\ the theorem of Merino L\'{o}pez relating recurrent sandpiles to a specialization of the Tutte polynomial; and 6.\ a result of Athreya and J\'{a}rai which shows that in the infinite volume limit, the bulk average height of a recurrent sandpile coincides with the expected height at the origin.  Items 1-3 are used to prove existence of the limits in Theorem~\ref{thm:main}, and items 4-6 are used to relate them to one another.

\section{The looping constant of \texorpdfstring{$\Z^d$}{Zd}}
\label{sec:limit}

A \emph{walk} in $\Z^d$ is a sequence of vertices $\gamma_0,\gamma_1,\ldots$ such that $\gamma_i \sim \gamma_{i+1}$ for all $i$.  A \emph{path} is a walk whose vertices $\gamma_i$ are distinct.
Given a walk $\gamma = (\gamma_0,\ldots,\gamma_m)$, Lawler \cite{Lawler80} defined the \emph{loop-erasure} $LE(\gamma)$ as the path obtained by deleting the cycles of $\gamma$ in chronological order.  Formally,
	\[ LE(\gamma) = (\gamma_{s(0)}, \ldots, \gamma_{s(J)} ) \]
where $s(0)=0$ and for $j \geq 0$,
	\[ s(j+1) = 1+ \max \{ i \mid \gamma_i = \gamma_{s(j)} \}. \]
The sequence $s$ necessarily satisfies $s(J+1) = m+1$ for some $J \geq 0$.  This $J$ is the number of edges in the loop-erased path $LE(\gamma)$.

For $n\geq 1$, let $Q_n = [-n,n]^d \cap \Z^d$.  Let $\gamma^n$ be a simple random walk in $\Z^d$ stopped on first exiting the cube $Q_n$; that is,
	\[ \gamma^n = (\gamma_0, \ldots, \gamma_T ) \]
where $\gamma_0=o$; the increments $\gamma_{i+1}-\gamma_i$ are independent and uniformly distributed elements of $\{\pm \mathbf{e}_1,\ldots,\pm \mathbf{e}_d\}$; and $T = \min \{i \geq 0 | \gamma_i \notin Q_n\}$. 
Fix $k \geq 1$, and for $n \geq k$ let $P^n$ be the distribution of the first $k$ steps of the loop-erasure $LE(\gamma^n)$.  Thus $P^n$ is a probability measure on the set $\Gamma_k$ of paths of length $k$ starting at the origin in $\Z^d$. Lawler \cite[Prop.\ 7.4.2]{Lawler91} shows that the measures $P^n$ converge as $n \to \infty$ (this is easy in dimensions $d\geq 3$, but requires some work in dimension $d=2$ because of the recurrence of simple random walk).  Combining these measures on $\Gamma_k$ for $k \geq 1$, we obtain a measure $P$ on infinite paths starting at the origin in $\Z^d$.  

Let $\alpha$ be an infinite loop-erased random walk starting at the origin in $\Z^d$, that is, a random path with distribution $P$.  We define the \emph{looping constant} $\xi$ of $\Z^d$ as the expected number of neighbors of the origin lying on $\alpha$.  By symmetry, each of the $2d$ neighbors $\pm \mathbf{e}_i$ is equally likely to lie on $\alpha$, so
	\begin{equation}
	\label{eq:theloopingconstant}
	 \xi  = 2d P(\mathbf{e}_1 \in \alpha).
	 \end{equation}
	 
\begin{figure}
\centering
\includegraphics[height=0.35\textheight]{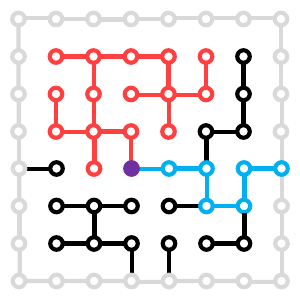} 
\caption{A wired spanning forest of the $6\times 6$ square grid.  The origin is the filled purple vertex.  Its path of ancestors is shown in blue, and its subtree of descendants is shown in red.  The wired boundary is shown in gray.  In this example, the north and west neighbors are descendants of the origin, but the south and east neighbors are not.}
\label{fig:desc}
\end{figure}

We first relate the looping constant $\xi$ to the wired uniform spanning forest.
A \emph{wired spanning forest} $F$ of $V$ is an acyclic spanning subgraph of $V \cup \partial V$ such that for every $y \in V$ there is a unique path in $F$ from $y$ to $\partial V$ (Figure~2).  The vertices on this path are called \emph{ancestors} of $y$ in $F$.  If $x$ is an ancestor of $y$, we write $x <_F y$, and we also say that $y$ is a \emph{descendant} of $x$. 

A \emph{wired uniform spanning forest} (WUSF) of $V$ is a wired spanning forest of $V$ selected uniformly at random.  Pemantle~\cite{Pem91} showed that the path from $x$ to $\partial V$ in the wired uniform spanning forest on $V$ has the same distribution as the loop-erasure of a simple random walk started at $x$ and stopped on first hitting $\partial V$.  

Pemantle also defined the WUSF on $\Z^d$ itself.  An important property of the WUSF on $\Z^d$ is that almost surely, each connected component has only one end \cite{Pem91,BLPS,LMS08}, which means that any two infinite paths in the same component differ in only finitely many edges.  It follows that 
the origin has only finitely many descendants.  In other words, only finitely many vertices $v \in \Z^d$ have the property that the origin is on the unique infinite path in the WUSF starting at~$v$.

Let $V_1 \subset V_2 \subset \cdots$ be an exhaustion of $\Z^d$, and let $F_n$ be a wired uniform spanning forest of $V_n$.  Let $F$ be a wired uniform spanning forest of $\Z^d$.  Denote by $\PP_n$ and $\PP$ the distributions of $F_n$ and $F$ respectively.  The defining property of $\PP$ is that if $\mathcal{A}$ is a cylinder event (i.e., an event involving a fixed finite set of edges of $\Z^d$) then $\PP_n(\mathcal{A}) \to \PP(\mathcal{A})$ as $n \to \infty$.  Note that the event that $x$ is a descendant of $o$ is not a cylinder event, which is why the proof of the following lemma requires some care.

\begin{lemma}
\label{lem:finitelooping}
Let 
	\[ \xi_n = \sum_{x \sim o} \PP_n \{o <_{F_n} x \} \]
 be the expected number of neighbors of $o$ which are descendants of $o$ in $F_n$.  Then as $n \to \infty$
	\[ \xi_n \to \xi \]
where $\xi$ is the looping constant \eqref{eq:theloopingconstant}.
\end{lemma}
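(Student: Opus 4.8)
I would prove Lemma~\ref{lem:finitelooping} by rewriting each summand $\PP_n\{o <_{F_n} x\}$ as a loop-erased walk probability, passing to the infinite loop-erased walk, and then reconstructing $\xi$ by translation invariance. Fix a neighbor $y$ of $o$ and take $n$ large enough that $y \in V_n$. By Pemantle's theorem~\cite{Pem91}, realized concretely by running Wilson's algorithm with $y$ as its first vertex, the path from $y$ to $\partial V_n$ in $F_n$ has the same law as $LE(S^y)$, the loop-erasure of simple random walk $S^y$ started at $y$ and stopped on first hitting $\partial V_n$. Since $o <_{F_n} y$ is exactly the event that $o$ lies on this path,
\[ \PP_n\{o <_{F_n} y\} \;=\; \PP\{o \in LE(S^y)\}. \]

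Next I would show this converges to $\PP\{o \in \alpha^{(y)}\}$, where $\alpha^{(y)}$ is the infinite loop-erased walk started at $y$: the construction of Section~\ref{sec:limit} applies verbatim with $y$ in place of $o$ by translation invariance of simple random walk, and is insensitive to the exhaustion used. The $k$-step initial distributions of $LE(S^y)$ converge to those of $\alpha^{(y)}$ by Lawler's convergence theorem~\cite{Lawler91}; and because an infinite self-avoiding path in $\Z^d$ must leave every finite set, $\alpha^{(y)}$ passes through the fixed vertex $o$ at most once and only up to an a.s.\ finite step, so $\{o \in \alpha^{(y)}\}$ is determined by a sufficiently long initial segment. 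The $\liminf$ half of the convergence then follows at once, and the $\limsup$ half once one has the uniform tail estimate of the next paragraph. To finish, I would observe that $\alpha^{(y)}$ is the $y$-translate of the infinite loop-erased walk $\alpha$ started at $o$, so $\PP\{o \in \alpha^{(y)}\} = \PP\{-y \in \alpha\}$; since $y \mapsto -y$ permutes the $2d$ neighbors of $o$,
\[ \xi_n \;=\; \sum_{y \sim o} \PP_n\{o <_{F_n} y\} \;\longrightarrow\; \sum_{y \sim o}\PP\{-y \in \alpha\} \;=\; \sum_{z \sim o}\PP\{z \in \alpha\} \;=\; \xi. \]

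The step I expect to be the real obstacle is the uniform tail estimate
\[ \lim_{k\to\infty}\ \sup_{n}\ \PP\bigl\{ LE(S^y)\text{ visits }o\text{ at a step later than }k \bigr\} \;=\; 0, \]
which is precisely what lets the non-cylinder event $\{o <_{F_n} y\}$ be squeezed between its cylinder truncations --- the ``care'' flagged in the paragraph preceding the lemma. Under the identification above this event is $\{\,o <_{F_n} y,\ \mathrm{dist}_{F_n}(o,y) > k\,\}$: the origin is a descendant of its own neighbor, but only through an arbitrarily long detour in $F_n$. This is the transience of loop-erased walk, and conceptually it is the finite-volume shadow of the one-endedness of the wired uniform spanning forest~\cite{Pem91,BLPS,LMS08}, i.e.\ of the fact that the origin has only finitely many descendants. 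For $d \ge 3$ it should be immediate from transience of simple random walk, the probability of returning to $o$ from distance $r$ being $O(r^{2-d})$ uniformly in $n$. For $d = 2$ it is the deeper --- but standard --- fact, contained in Lawler's loop-erased walk estimates~\cite{Lawler91}, that the planar loop-erased walk does not return to a fixed bounded set once it has escaped far enough, uniformly over the approximating domains. Granting this, both $\liminf_n$ and $\limsup_n$ of $\PP_n\{o <_{F_n} y\}$ equal $\PP\{o \in \alpha^{(y)}\}$, and summing over the $2d$ neighbors gives the lemma.
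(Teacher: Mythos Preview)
Your outline and the paper's share the same skeleton---Wilson's algorithm, a convergence step, and translation invariance---but the execution differs. The paper introduces the intermediate quantity $\xi^* = \sum_{x \sim o}\PP(o <_F x)$ for the WUSF $F$ on $\Z^d$: it first shows $\xi_n \to \xi^*$ using weak convergence of the WUSF together with one-endedness of $F$ (so that the event ``$o$ has a descendant outside $V_k$'' has small probability), and then shows $\xi^* = \xi$ by applying translation invariance to $F$ (turning descendant neighbors into ancestor neighbors) and identifying the ancestor path from $o$ with the infinite LERW via Wilson's method rooted at infinity, with extra care when $d=2$. You instead apply translation invariance directly to the infinite LERW and try to prove $\PP\{o \in LE(S^y)\} \to \PP\{o \in \alpha^{(y)}\}$ from Lawler's $k$-step convergence plus a tail bound. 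Your route is a little more direct, but it shifts the weight onto two points that are not adequately handled.

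The real gap is your $d \geq 3$ justification of the tail estimate. The bound ``probability of returning to $o$ from distance $r$ is $O(r^{2-d})$'' does not yield what you want, because the $k$-th vertex of a self-avoiding path started at $y\sim o$ need not be far from $o$: such a path can remain inside a ball of radius $O(k^{1/d})$, so there is no $r \to \infty$ to plug into the Green's function decay. What you actually need---tightness in $n$ of the descendant tree of $o$ in $F_n$---is precisely the quantitative form of one-endedness that the paper imports from \cite{Pem91,BLPS,LMS08} as a black box. (For $d\ge 3$ one can alternatively couple all the stopped walks to the single transient infinite walk and show $\mathbb{1}\{o \in LE(\gamma^{y,n})\} \to \mathbb{1}\{o \in LE(\gamma^y)\}$ almost surely, which bypasses the tail bound entirely; but that is a different argument from the one you sketched.) A secondary issue is that Lawler's $k$-step convergence in \cite{Lawler91} is stated for cubes $Q_n$, and in $d=2$ its extension to an arbitrary standard exhaustion $(V_n)$ is not free; the paper sidesteps this by routing through WUSF convergence, which holds for any exhaustion.
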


\begin{proof}
Let $\xi^* = \sum_{x \sim o} \PP(o <_F x)$ be the expected number of neighbors of $o$ which are descendants of $o$ in $F$, where $F$ is a WUSF on $\Z^d$.  We first show that $\xi_n \to \xi^*$.  For fixed $k$, let $\mathcal{A}_k$ be the event that some vertex outside $V_k$ is a descendant of $o$ in $F$, and let $\delta_k = \PP(\mathcal{A}_k)$. Since each connected component of $F$ has one end, we have $\delta_k \to 0$ as $k \to \infty$.  Since $\mathcal{A}_k$ is a cylinder event, $\PP_n(\mathcal{A}_k) \to \delta_k$ as $n \to \infty$.  

Fix a neighbor $x$ of the origin $o$, and let $\gamma$ (resp.\ $\gamma^n$) be the infinite path from $x$ in $F$ (resp.\ the path from $x$ to $\partial V_n$ in $F_n$).  Let $\gamma_k$ (resp.\ $\gamma_k^n$) be the initial segment of $\gamma$ (resp.\ $\gamma^n$) until it first exits $V_k$.  Then
	\[ \PP_n(o \in \gamma_k^n) \;\stackrel{n \to \infty}{\longrightarrow}\; \PP(o \in \gamma_k) \;\stackrel{k \to \infty}{\longrightarrow}\; \PP(o \in \gamma). \]
Moreover the event that $o$ lies on $\gamma^n$ but not on the initial segment $\gamma_k^n$, is contained in the event $\mathcal{A}_k$ that $o$ has a descendant in $F_n$ outside $V_k$.  Hence
	\[ \PP_n(o \in \gamma^n) - \PP_n(o \in \gamma_k^n) \leq \PP_n(\mathcal{A}_k) \longrightarrow \delta_k.  \]
Now fix $\epsilon>0$ and take $k$ large enough that $\delta_k < \epsilon$.  We have
	\begin{align*} |\PP_n(o \in \gamma^n) - \PP(o \in \gamma)| 
	&\leq |\PP_n(o \in \gamma^n) - \PP_n(o \in \gamma_k^n)| \;+ \\
	&\quad + |\PP_n(o \in \gamma_k^n) - \PP(o \in \gamma_k)| \;+ \\
	&\qquad + |\PP(o \in \gamma_k) - \PP(o \in \gamma)|.
	\end{align*}
The right side is $<3\epsilon$ for sufficiently large $n$.  Hence $\PP_n(o <_{F_n} x) \to \PP(o <_F x)$ as $n \to \infty$.  Summing over neighbors $x$ of $o$ we obtain $\xi_n \to \xi^*$ as $n \to \infty$.  

Next we show that $\xi^*=\xi$.  By translation invariance of the WUSF on $\Z^d$,
	\[ \xi^* = \sum_{x \sim o} \PP(o <_F x) = \sum_{x \sim o} \PP(-x <_F o). \]
That is, $\xi^*$ is also the expected number of neighbors of $o$ which are \emph{ancestors} of~$o$ in~$F$.  To complete the proof it suffices to show that the unique infinite path $\gamma$ in $F$ starting at $o$ (consisting of $o$ and all of its ancestors) has the same distribution as the infinite loop-erased random walk $\alpha$ in $\Z^d$ starting at~$o$.  In the transient case $d\geq 3$, this is immediate from Wilson's method rooted at infinity \cite[Theorem~5.1]{BLPS}.  In the recurrent case $d=2$, Wilson's method requires us to choose a finite root $r \in \Z^2$.  By \cite[Proposition~5.6]{BLPS}, the unique path $\gamma_{r}$ from $o$ to $r$ in $F$ has the same distribution as the loop-erasure of simple random walk started at $o$ and stopped on first hitting $r$.  Fix $k \geq 1$, and let $\mathcal{B}_k$ be the event that~$r$ has an ancestor in the square $Q_k = [-k,k]^2 \cap \Z^2$.  Since the WUSF in $\Z^2$ has only one end, by taking~$r$ sufficiently far from~$o$ we can ensure that $\PP(\mathcal{B}_k)$ is as small as desired.  Let $\gamma_{r}^k, \gamma^k, \alpha^k$ be the initial segments of the paths $\gamma_r, \gamma, \alpha$ stopped when they first exit $Q_k$.  On the event $\mathcal{B}_k^c$ we have $\gamma_r^k = \gamma^k$.  For fixed $k$, as $r \to \infty$ the distribution of $\gamma_r^k$ converges to the distribution of $\alpha^k$, which shows that $\gamma^k$ has the same distribution as~$\alpha^k$.  Since this holds for all $k$, we conclude that $\gamma$ has the same distribution as~$\alpha$, which completes the proof.
\end{proof}

\section{Burning bijection}
\label{sec:burning}

Majumdar and Dhar~\cite{MD92} used Dhar's burning algorithm~\cite{Dhar90} to give a bijection between recurrent sandpiles and spanning trees.  Priezzhev~\cite{Priezzhev} showed how the local statistics of sandpiles correspond to the (not completely local) statistics of spanning trees under the burning bijection.  For a given vertex $x$, the correspondence relates the number of particles at $x$ in the sandpile with the number of neighbors of $x$ which are descendants of $x$ in the spanning tree.  This correspondence is described in several other papers in the physics literature; see~\cite[pages 7-8]{JPR} and the references cited there.

Let $G=(V,E)$ be a finite connected undirected graph, with self-loops and multiple edges permitted.  Fix a vertex $s \in V$ called the sink, which is not permitted to topple.  For each vertex $x \neq s$, fix a total ordering $<_x$ of the set $E_x$ of edges incident to~$x$.  

Let $t$ be a spanning tree of $G$.  For each vertex $x\neq s$, write $e_t(x)$ for the first edge in the path from $x$ to $s$ in $t$, and let $\ell_t(x)$ be the number of edges in this path.  The burning bijection associates to $t$ the recurrent sandpile
	\[ \sigma_t(x) = \delta_x - 1 - a_t(x) - b_t(x) \]
where $\delta_x = \# E_x$ is the degree of $x$, and
	\[ a_t(x) = \# \{ (x,y) \in E_x \,|\, \ell_t(y) < \ell_t(x)-1 \}; \]
	\[ b_t(x) = \# \{ (x,y) \in E_x \,|\, \ell_t(y) = \ell_t(x)-1 \mbox{ and } (x,y) <_x e_t(x) \}.   \]

Now let $t$ be a uniform random spanning tree of $G$, so that $\sigma_t$ is a uniform random recurrent sandpile.  We say that $y$ is a \emph{descendant} of $x$ in $t$ if $x$ is on the path from $y$ to $s$ in $t$.  Let $D_x$ be the number of neighbors of~$x$ that are descendants of~$x$ in~$t$.
According to the next lemma, conditional on $D_x$, the distribution of $\sigma_t(x)$ is uniform on $\{D_x, D_x + 1, \ldots, \delta_x -1\}$.

\begin{lemma}
\label{conditionaluniformity}
For any vertex $x \neq s$, and any $0 \leq j \leq k \leq \delta_x-1$,
	\[ \PP(\sigma_t(x)=k \,|\, D_x=j) = \frac{1}{\delta_x-j}. \]
\end{lemma}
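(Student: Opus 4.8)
The plan is to fix the vertex $x \neq s$ and condition on the entire spanning tree structure \emph{except} for the choice of which incident edge $e_t(x)$ is used to leave $x$ on the path towards $s$. More precisely, I would condition on the combinatorial data that determines $\ell_t(y)$ for all $y$ and determines $D_x$, and show that, given this data, the edge $e_t(x)$ is uniformly distributed among the edges $(x,y) \in E_x$ with $\ell_t(y) = \ell_t(x) - 1$. The number of such edges is exactly $\delta_x - a_t(x) - D_x$: among the $\delta_x$ edges at $x$, the $a_t(x)$ edges going to strictly-closer vertices contribute to $a_t(x)$, the $D_x$ edges going to descendants necessarily have $\ell_t(y) > \ell_t(x)$, and the remaining $\delta_x - a_t(x) - D_x$ edges are precisely those with $\ell_t(y) = \ell_t(x)-1$ which are candidates for $e_t(x)$. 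Once $e_t(x)$ is uniform among these, $b_t(x)$ — which counts how many of them come before $e_t(x)$ in the order $<_x$ — is uniform on $\{0, 1, \ldots, \delta_x - a_t(x) - D_x - 1\}$, and hence $\sigma_t(x) = \delta_x - 1 - a_t(x) - b_t(x)$ is uniform on $\{D_x, D_x+1, \ldots, \delta_x - 1 - a_t(x)\}$. Wait — this gives the wrong top endpoint, so I need to be more careful about $a_t(x)$; see below.

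The key step is therefore the conditional-uniformity claim about $e_t(x)$. I would establish this using the standard bijective/reweighting description of the uniform spanning tree, or more directly via Wilson's algorithm: run Wilson's algorithm with $x$ as the \emph{last} vertex added, so that the branch from $x$ is the loop-erasure of a simple random walk from $x$ to the tree built so far. Alternatively, and more elementarily, one can argue by an involution on spanning trees: given a spanning tree $t$ and any edge $(x,y')$ with $\ell_t(y') = \ell_t(x) - 1$ and $(x,y') \neq e_t(x)$, swapping $e_t(x)$ for $(x,y')$ produces another spanning tree $t'$ with the same values $\ell_{t'}(\cdot)$, the same set of descendants of $x$, and the same $a_{t'}(x)$. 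This swap map is a bijection on the relevant fiber, which shows the uniform distribution of $e_t(x)$ on that fiber. I expect the cleanest route is to phrase this as: conditioned on the forest $t \setminus \{e_t(x)\}$ restricted away from $x$ together with the distances, the edge $e_t(x)$ is uniform among the valid choices — this is essentially a one-line consequence of the matrix-tree / deletion-contraction structure.

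The main obstacle — and the thing to get exactly right — is the bookkeeping that makes the top of the range come out to $\delta_x - 1$ rather than $\delta_x - 1 - a_t(x)$. The resolution is that I should \emph{also} condition only on $D_x$ and average over the distribution of $a_t(x)$: the claim is not that $\sigma_t(x)$ is uniform given the full distance profile, but only given $D_x = j$. So the real statement to prove is the joint one: conditioned on $D_x = j$, the pair (effectively, $a_t(x)$ together with the position of $e_t(x)$) is arranged so that $a_t(x) + b_t(x)$ is uniform on $\{0, 1, \ldots, \delta_x - 1 - j\}$. I would prove this by the following counting identity: group spanning trees with $D_x = j$ by the value $a = a_t(x)$; within each group the argument above gives that $b_t(x)$ is uniform on $\{0,\ldots,\delta_x - a - j - 1\}$, so $a + b_t(x)$ is uniform on $\{a, \ldots, \delta_x - j - 1\}$; and then a symmetry/telescoping argument (or an explicit cyclic-shift bijection on the edge-orderings at $x$, rotating which "distance-$(\ell-1)$" edge plays the role of $e_t(x)$ and simultaneously re-interpreting some closer edges) shows the mixture over $a$ of these uniform distributions is again uniform on the full range $\{0,\ldots,\delta_x-1-j\}$, equivalently that the number of spanning trees with $D_x = j$ and $a_t(x)+b_t(x) = m$ is independent of $m \in \{0,\ldots,\delta_x-1-j\}$. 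Checking that this mixture collapses to a single uniform distribution — i.e. that the counts in the telescoping line up — is the one genuinely delicate point, and I would handle it by exhibiting an explicit bijection between $\{t : D_x = j,\ a_t(x) + b_t(x) = m\}$ and $\{t : D_x = j,\ a_t(x) + b_t(x) = m+1\}$ for each $m < \delta_x - 1 - j$, built from relabeling one edge at $x$ together with adjusting the total order $<_x$ appropriately.
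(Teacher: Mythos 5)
Your starting point --- conditioning on the spanning tree minus the edge $e_t(x)$, i.e.\ on the two-component forest $F = \mathrm{des}_x(t) \cup \mathrm{nondes}_x(t)$ --- is exactly the paper's, but the execution has a genuine gap. The first problem is the premise that this conditioning ``determines $\ell_t(y)$ for all $y$'': it determines $\ell_t$ only on the non-descendant component $F_s$. The distance $\ell_t(x)$ itself (and hence $a_t(x)$, and hence which edges satisfy $\ell_t(y)=\ell_t(x)-1$) depends on which candidate edge $e_t(x)=(x,y_i)$ is chosen, since $\ell_t(x)=\ell_{F_s}(y_i)+1$ varies with $i$. So ``$e_t(x)$ is uniform among the edges at distance $\ell_t(x)-1$'' is not even a well-posed event under your conditioning, and the candidates for $e_t(x)$ are in fact \emph{all} edges from $x$ into $F_s$, not just those in one distance class. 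Your subsequent repair does not close the gap: the claim that, given $a_t(x)=a$, the quantity $b_t(x)$ is uniform on $\{0,\dots,\delta_x-a-j-1\}$ is false in general (given $F$ and $a$, the edge $e_t(x)$ is confined to a single distance class of $F_s$-neighbors, and $b_t(x)$ ranges over $\{0,\dots,c-1\}$ where $c$ is the size of that class, which need not equal $\delta_x-a-j$); and the final bijection you invoke is built by ``adjusting the total order $<_x$'', which is not a legal move, since $<_x$ is fixed data in the definition of the burning bijection.

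The missing idea is a single clean computation inside each fiber. The trees compatible with $F$ are exactly $t_i=F\cup\{e_i\}$, where $e_1,\dots,e_r$ are the edges from $x$ into $F_s$ and $r=\delta_x-j$. Order them so that $\ell_{F_s}(y_1)\le\cdots\le\ell_{F_s}(y_r)$, breaking ties by $<_x$. In the tree $t_i$ one has $\ell_{t_i}(x)=\ell_{F_s}(y_i)+1$; descendants of $x$ contribute to neither $a_{t_i}(x)$ nor $b_{t_i}(x)$; the edges $e_{i'}$ with $\ell_{F_s}(y_{i'})<\ell_{F_s}(y_i)$ contribute to $a_{t_i}(x)$; and those at equal distance with $e_{i'}<_x e_i$ contribute to $b_{t_i}(x)$. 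Hence $a_{t_i}(x)+b_{t_i}(x)=i-1$ exactly, so $\sigma_{t_i}(x)=\delta_x-i$ sweeps out $\{j,\dots,\delta_x-1\}$ bijectively as $i$ runs from $1$ to $r$. This is precisely the telescoping you were hoping for, but it happens automatically within each fiber $\mathcal{T}(F)$ --- no averaging over $a$, no auxiliary bijection, and no modification of $<_x$ is required. Averaging over all $F$ whose descendant component gives $D_x=j$ then yields the lemma.
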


To prove this, we will show something slightly stronger, conditioning on the entire tree minus the edge $e_x(t)$.  Write $\mathrm{des}_x(t)$ for the subtree of descendants of $x$ in $t$, and $\mathrm{nondes}_x(t)$ for the subtree of non-descendants of $x$ in $t$.  The former subtree is rooted at $x$, and the latter is rooted at $s$.  Suppose that $F = F_x \cup F_s$ is a two-component spanning forest of $G$, with one component $F_x$ rooted at $x$ and the other component $F_s$ rooted at $s$.  Consider the set of trees
	\[ \mathcal{T}(F) = \{t \mid \mathrm{des}_x(t)=F_x, ~ \mathrm{nondes}_x(t)=F_s \}. \]
See Figure~\ref{fig:twocomp}.

\begin{figure}
\centering
\includegraphics[height=0.35\textheight]{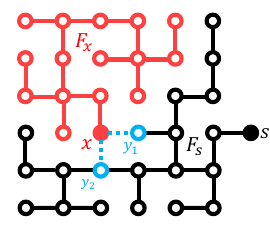} 
\caption{A two-component spanning forest $F = F_x \cup F_s$.  The set $\mathcal{T}(F)$ consists of the two trees $t_i = F \cup \{e_i\}$ obtained by adding one of the dashed edges $e_i = (x,y_i)$.  Since $\ell_{F_s}(y_1)=\ell_{F_s}(y_2)=5$, the ordering of the $e_i$ is determined by the ordering on $<_x$ on $E_x$.}
\label{fig:twocomp}
\end{figure}

\begin{lemma}
Let $j$ be the number of neighbors of $x$ that belong to $F_x$.  Then
	\begin{itemize}
	\item If $t \in \mathcal{T}(F)$, then $\sigma_t(x) \geq j$.
	\item For each integer $k \in \{j,j+1, \ldots, \delta_x-1\}$, there is
exactly one spanning tree $t \in \mathcal{T}(F)$ of $G$ with $\sigma_t(x)=k$.
	\end{itemize}
\end{lemma}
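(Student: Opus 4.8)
The plan is to describe $\mathcal{T}(F)$ completely explicitly and then read off $\sigma_t(x)$ from the burning formula $\sigma_t(x) = \delta_x - 1 - a_t(x) - b_t(x)$ on each member of it.

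\emph{First}, I would determine $\mathcal{T}(F)$. A spanning tree has exactly one more edge than the two-component forest $F = F_x \cup F_s$, and if $t \in \mathcal{T}(F)$ then every edge of $t$ lies in $\mathrm{des}_x(t) = F_x$, lies in $\mathrm{nondes}_x(t) = F_s$, or is the edge $e_t(x)$ joining $x$ to its parent; so $t = F \cup \{e_t(x)\}$, with $e_t(x)$ joining $x$ to a vertex of $F_s$, the parent of $x$ being a non-descendant. Conversely, for any edge $e = (x,y)$ of $G$ with $y$ in $F_s$, the graph $F \cup \{e\}$ is a spanning tree in which the path from $x$ to $s$ begins with $e$ and then stays inside $F_s$, so the descendants of $x$ are precisely the vertices of $F_x$; hence $F \cup \{e\} \in \mathcal{T}(F)$. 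Thus $\mathcal{T}(F)$ is exactly the set of trees $F \cup \{e\}$ as $e$ runs over the edges of $G$ from $x$ to $F_s$; if $m$ denotes the number of such edges, then $\#\mathcal{T}(F) = m$, and since the remaining edges at $x$ all join $x$ to $F_x$ we have $m = \delta_x - j$.

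\emph{Next}, I would compute $a_t(x) + b_t(x)$ for $t = F \cup \{e\}$ with $e = (x,y)$, $y \in F_s$. Here $e_t(x) = e$ and $\ell_t(x) = 1 + \ell_{F_s}(y)$, where $\ell_{F_s}$ is distance to $s$ inside $F_s$. The step needing care: an edge $(x,y')$ with $y'$ a vertex of $F_x$ has $\ell_t(y') = d_{F_x}(x,y') + \ell_t(x) > \ell_t(x) - 1$, hence contributes to neither $a_t(x)$ nor $b_t(x)$ --- the edges from $x$ down into its own descendant subtree are invisible to the burning formula. An edge $(x,y')$ with $y'$ in $F_s$ instead has $\ell_t(y') = \ell_{F_s}(y')$, since adjoining $e$ does not change distances within $F_s$; it contributes to $a_t(x)$ precisely when $\ell_{F_s}(y') < \ell_{F_s}(y)$, and to $b_t(x)$ precisely when $\ell_{F_s}(y') = \ell_{F_s}(y)$ and $(x,y') <_x e$. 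Declaring $e' \prec e$ to hold when the $F_s$-endpoint of $e'$ is strictly closer to $s$ than that of $e$, or is at equal distance while $e' <_x e$, defines a total order on the $m$ edges from $x$ to $F_s$, and the above computation reads $a_t(x) + b_t(x) = \#\{e' : e' \prec e\}$.

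\emph{Finally}, if $e$ is the $i$-th of the $m$ edges from $x$ to $F_s$ in the order $\prec$, then $a_t(x) + b_t(x) = i - 1$ and so $\sigma_t(x) = \delta_x - i$. Letting $e$ range over all $m$ of these edges, $\sigma_t(x)$ takes each of $\delta_x - 1, \delta_x - 2, \ldots, \delta_x - m = j$ exactly once, which gives both bullets of the lemma. I expect the one genuine obstacle to be the invisibility observation in the second step: once it is clear that chords from $x$ into $F_x$ drop out of $a_t$ and $b_t$, so that $a_t(x) + b_t(x)$ is governed only by the $F_s$-depths and the order $<_x$ of the edges from $x$ to $F_s$, what remains is routine bookkeeping with the order $\prec$.
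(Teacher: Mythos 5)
Your proof is correct and follows essentially the same route as the paper's: identify $\mathcal{T}(F)$ with the set of trees $F \cup \{e\}$ over edges $e$ from $x$ to $F_s$, order those edges by $F_s$-depth of their endpoint with ties broken by $<_x$ (your $\prec$ is exactly the paper's ordering $e_1, \ldots, e_r$), and observe that edges from $x$ into $F_x$ contribute nothing to $a_t(x)+b_t(x)$ while the $i$-th tree has $a_t(x)+b_t(x)=i-1$, hence $\sigma_t(x)=\delta_x - i$. You are somewhat more explicit than the paper about why $\mathcal{T}(F)$ has exactly this form and why the chords into $F_x$ are invisible, but the argument is the same.
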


\begin{proof}
Let
	\[ \mathcal{E} = \{ (x,y)\in E \,|\, y\in F_s \}. \]
Order the edges $e_1,\ldots,e_r$ of $\mathcal{E}$ so that
	\[ \ell_{F_s}(y_1) \leq \ldots \leq \ell_{F_s}(y_r) \]
where $e_i = (x,y_i)$, and $r=\delta_x-j$.  To break ties, if $\ell_{F_s}(y_i) = \ell_{F_s}(y_{i+1})$, then choose the ordering so that $e_i <_x e_{i+1}$. 

The elements of $\mathcal{T}(F)$ are precisely the trees 
	\[ t_i = F \cup \{e_i\} \]
for $i=1,\ldots,r$.  Note that $\ell_{t_i}|_{F_s} = \ell_{F_s}$, and
	\[ \ell_{t_i}(x) = \ell_{F_s}(y_i)+1. \]
Moreover, for any $z \in F_x$, since $x$ lies on the path from $z$ to $s$ in $t_i$, we have $\ell_{t_i}(z) > \ell_{t_i}(x)$.  By our choice of ordering of $\mathcal{E}$, we have
	\[ a_{t_i}(x) + b_{t_i}(x) = i-1 \]
hence 
	\[ \sigma_{t_i}(x) = \delta_x - 1-  a_{t_i}(x)-b_{t_i}(x) = \delta_x - i. \]
Thus as $i$ ranges from $1$ to $r$, the sandpile height $\sigma_{t_i}(x)$ assumes each integer value between $j = \delta_x - r$ and $\delta_x -1$ exactly once. \end{proof}

\section{Tutte polynomial}
\label{sec:tutte}


Let $G =(V,E)$ be any connected graph with $n$ vertices and $m$ edges (as before, $G$ is undirected and may have loops and multiple edges).  Fix a sink vertex $s \in V$, and let~$\delta$ be its degree.  We will use the following theorem of C.~Merino L\'{o}pez, which gives a generating function for recurrent sandpiles according to total number of particles.

\begin{theorem} \cite{Merino}
The Tutte polynomial $T_G(x,y)$ evaluated at $x=1$ is given by
	\[ T_G(1,y) = y^{\delta - m} \sum_{\sigma} y^{|\sigma|} \]
where the sum is over all recurrent sandpiles $\sigma$ on $G$ with sink at~$s$, and $|\sigma| = \sum_{x \neq s} \sigma(x)$ denotes the number of particles in $\sigma$.
\end{theorem}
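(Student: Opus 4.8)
The plan is to combine the burning bijection of Section~\ref{sec:burning} with the classical spanning-tree expansion of the Tutte polynomial, reducing the statement to a pointwise identity between the sandpile level statistic and the external activity of a spanning tree.

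Recall Tutte's theorem: for any linear order $<$ on the edge set $E$ one has $T_G(x,y) = \sum_t x^{\mathrm{in}(t)}y^{\mathrm{ex}(t)}$, summed over spanning trees $t$, where $\mathrm{in}(t)$ counts internally active tree edges ($<$-minimal in the fundamental cocircuit they determine) and $\mathrm{ex}(t)$ counts externally active non-tree edges ($<$-minimal in the fundamental cycle they form with $t$); in particular $T_G(1,y) = \sum_t y^{\mathrm{ex}(t)}$, and this is independent of the chosen order. On the sandpile side, if $\sigma_t$ is the recurrent sandpile assigned to $t$ by the burning bijection, then from $\sigma_t(x) = \delta_x - 1 - a_t(x) - b_t(x)$ and $\sum_{x \neq s}(\delta_x - 1) = 2m - \delta - n + 1$ we get
\[ |\sigma_t| = (2m - \delta - n + 1) - A(t), \qquad A(t) := \sum_{x \neq s}\bigl(a_t(x) + b_t(x)\bigr). \]
Since the burning map is a bijection and $\sum_\sigma y^{|\sigma|}$ is intrinsic (it does not depend on the vertex orders $<_x$), substituting into the claimed identity $T_G(1,y) = y^{\delta - m}\sum_\sigma y^{|\sigma|}$ reduces it to $\sum_t y^{\,m-n+1-A(t)} = \sum_t y^{\mathrm{ex}(t)}$. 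As every spanning tree has exactly $m - n + 1$ non-tree edges, it suffices to make one convenient choice of the orders $<$ and $<_x$ (both sides being order-independent) for which the pointwise identity
\[ A(t) = (m - n + 1) - \mathrm{ex}(t) \]
holds for every $t$; equivalently, $A(t)$ should count precisely the externally \emph{inactive} non-tree edges of $t$.

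The crux is to pin down a suitable order and verify this identity. One takes $<$ to be an order closely tied to the edge-consumption order of Dhar's burning algorithm run from the sink, with $<_x$ its restriction to $E_x$, and then checks, for every spanning tree $t$, that the edges recorded by $a_t$ and $b_t$ are exactly the non-tree edges of $t$ that fail to be $<$-minimal in their fundamental cycle. Carrying out this matching --- and confirming that the burning bijection really is a bijection for this choice --- is the combinatorial heart of the argument, due to Priezzhev~\cite{Priezzhev} and Merino~L\'{o}pez~\cite{Merino}, and is the step I expect to be the main obstacle. Once it is established, the remainder is the elementary edge-counting above.

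An alternative route bypasses activities entirely: one shows directly that $R_G(y) := \sum_\sigma y^{|\sigma|}$ obeys the (normalized) deletion--contraction recursion of the Tutte polynomial. The bookkeeping above turns the target, for an edge $e$ that is neither a loop, a bridge, nor incident to the sink, into $R_G(y) = y\,R_{G \setminus e}(y) + y\,R_{G/e}(y)$, which can be proved by partitioning the recurrent sandpiles of $G$ according to the status of $e$; the cases where $e$ is a loop, a bridge, or incident to the sink are handled separately, and comparison with the Tutte recursion finishes the proof by induction on $\#E$. Here the delicate point is controlling how recurrence behaves under contraction of $e$.
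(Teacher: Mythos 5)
The paper itself does not prove this statement: it is imported verbatim from Merino L\'{o}pez \cite{Merino} as a known ingredient, so there is no in-paper argument to compare against. Judged on its own terms, your proposal is an honest and well-organized reduction, but it is not a proof. The bookkeeping is correct: $\sum_{x\neq s}(\delta_x-1)=2m-\delta-n+1$, so $|\sigma_t|=(2m-\delta-n+1)-A(t)$ and the claim becomes $\sum_t y^{\,m-n+1-A(t)}=\sum_t y^{\mathrm{ex}(t)}$; and you are right that both sides are independent of the chosen orders, so one may pick convenient ones. But the identity $A(t)=(m-n+1)-\mathrm{ex}(t)$ --- that is, that the non-tree edges \emph{not} recorded by $a_t+b_t$ (those with $\ell_t(y)=\ell_t(x)$, or with $\ell_t(y)=\ell_t(x)-1$ and $(x,y)\geq_x e_t(x)$) are exactly the externally active edges for a suitable global order --- is precisely the content of the theorem, and you explicitly defer it ("the step I expect to be the main obstacle"). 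Specifying the order (essentially the edge-consumption order of the burning algorithm) and verifying the matching of fundamental-cycle minimality against the level function $\ell_t$ is where all the work lies; without it the argument is a reformulation, not a proof. The same applies to your alternative deletion--contraction route: the recursion $R_G(y)=y\,R_{G\setminus e}(y)+y\,R_{G/e}(y)$ and the behaviour of recurrence under contraction are asserted, not established, and the contraction case is known to be the delicate one.

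Two smaller points. First, you implicitly use that each non-tree edge is counted at most once in $A(t)$ (it cannot be charged to both endpoints); this is true by inspection of the defining inequalities on $\ell_t$, but it should be stated, since otherwise $A(t)\leq m-n+1$ is not even clear. Second, full bijectivity of the burning map onto recurrent configurations is needed to identify $\sum_t y^{|\sigma_t|}$ with $\sum_\sigma y^{|\sigma|}$; the paper's Lemma in Section~\ref{sec:burning} only gives a local uniformity statement conditional on the descendant subtree, so you would need to cite or prove the Majumdar--Dhar bijection in full. In short: the strategy is the standard and correct one, but the theorem's combinatorial core is left as a gap.
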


Differentiating and evaluating at $y=1$, we obtain
	\[ \frac{\partial}{\partial y} T_G(1,1) = \sum_{\sigma} (\delta-m+|\sigma|). \]
The number of recurrent configurations is $T_G(1,1)$, so the mean height of a uniform random recurrent configuration is
	\[ \bar{\zeta}(G) := \frac{1}{n T_G(1,1)} \sum_{\sigma} |\sigma|. \]
Combining these expressions yields 
	
\begin{corollary}
\label{tuttepartial}
	\[ \bar{\zeta}(G) = \frac{1}{n} \left( m-\delta + \frac{\frac{\partial}{\partial y} T_G(1,1)}{T_G(1,1)}\right) = \frac{1}{n} \left( m-\delta + \frac{u(G)}{\kappa(G)} \right) \]
where $\kappa(G)$ is the number of spanning trees of $G$, and $u(G)$ is the number of spanning unicycles.
\end{corollary}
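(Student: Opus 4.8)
The plan is to split the statement into its two equalities: the first is pure algebra, obtained by rearranging the two displayed identities immediately preceding the corollary, and the second comes from reading off two coefficients of the Tutte polynomial from its subset-sum definition.

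For the first equality I would start from the consequence of Merino L\'{o}pez's theorem stated just above, namely $\frac{\partial}{\partial y}T_G(1,1) = \sum_\sigma(\delta-m+|\sigma|)$, where the sum runs over recurrent sandpiles. Since the number of such sandpiles equals $T_G(1,1)$, this reads $\frac{\partial}{\partial y}T_G(1,1) = (\delta-m)\,T_G(1,1) + \sum_\sigma|\sigma|$. Solving for $\sum_\sigma|\sigma|$ and dividing by $n\,T_G(1,1)$, using the definition $\bar\zeta(G) = \frac{1}{n\,T_G(1,1)}\sum_\sigma|\sigma|$, yields $\bar\zeta(G) = \frac1n\bigl(m-\delta + \frac{\partial_y T_G(1,1)}{T_G(1,1)}\bigr)$. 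This step is entirely routine.

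For the second equality I would specialize the defining formula $T_G(x,y) = \sum_{A\subseteq E}(x-1)^{c(A)-1}(y-1)^{c(A)+\#A-n}$ at $x=1$. Any $A$ with $c(A)>1$ contributes a factor $0^{c(A)-1}=0$, so only the connected spanning subgraphs survive, and for those $c(A)=1$ makes the exponent of $(y-1)$ equal to $\#A-(n-1)\ge 0$, since a connected spanning subgraph of the connected graph $G$ has at least $n-1$ edges. Grouping by edge count gives $T_G(1,y) = \sum_{k\ge 0} a_k(y-1)^k$, where $a_k$ is the number of connected spanning subgraphs with exactly $n-1+k$ edges. Then $a_0=\kappa(G)$ counts spanning trees, and $a_1=u(G)$ counts connected spanning subgraphs with $n$ edges, which by the remark in the introduction (connected plus $\#E=\#V$ forces exactly one cycle) are precisely the spanning unicycles. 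Evaluating the series and its $y$-derivative at $y=1$ gives $T_G(1,1)=a_0=\kappa(G)$ and $\frac{\partial}{\partial y}T_G(1,1)=a_1=u(G)$, where one should note that $\frac{\partial T_G}{\partial y}(1,1)$ may be computed by first setting $x=1$ and then differentiating in $y$, since these operations commute. Substituting into the first equality completes the proof.

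There is no real obstacle here; the only point needing care is the bookkeeping in the $x=1$ specialization — observing that the $c(A)>1$ terms vanish and that the surviving exponent of $(y-1)$ is nonnegative, so that evaluation at $y=1$ picks out the spanning-tree coefficient $a_0$ and differentiation at $y=1$ picks out the spanning-unicycle coefficient $a_1$, with no contribution from the $a_0$ term (its derivative is zero).
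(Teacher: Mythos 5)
Your proposal is correct and follows essentially the same route as the paper: the first equality by differentiating Merino L\'{o}pez's identity $T_G(1,y)=y^{\delta-m}\sum_\sigma y^{|\sigma|}$ at $y=1$ and rearranging, and the second by specializing the subset-sum definition of $T_G$ at $x=1$ (with the $0^0=1$ convention) to read off $T_G(1,1)=\kappa(G)$ and $\frac{\partial}{\partial y}T_G(1,1)=u(G)$. The extra care you take with the vanishing of the $c(A)>1$ terms and the nonnegativity of the surviving exponent is exactly the bookkeeping the paper compresses into one sentence.
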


To derive the second equality of Corollary~\ref{tuttepartial} from the first, recall that 	
	\[ T_G(x,y) = \sum_{A \subset E} (x-1)^{c(A)-1} (y-1)^{c(A) + \#A - n} \]
where $c(A)$ is the number of connected components of the spanning subgraph $(V,A)$.  Here we interpret $0^0=1$.  Thus $T_G(1,1)$ counts connected spanning subgraphs of~$G$ containing exactly $n-1$ edges, which are precisely the spanning trees, while $\frac{\partial}{\partial y}T_G(1,1)$ counts connected spanning subgraphs containing exactly $n$ edges, which are precisely the unicycles.  The expression of $\bar{\zeta}$ in terms of the ratio $u(G)/\kappa(G)$ was exploited in \cite{FLW10} to estimate $\bar{\zeta}(G)$ when $G$ is a complete graph.

Next we relate the ratio $u(G)/\kappa(G)$ to the expected length of the cycle in a USU of~$G$.  If $U$ is a spanning unicycle of~$G$, then deleting any edge of its cycle yields a spanning tree, and each spanning tree is obtained from $m-n+1$ different unicycles in this way; hence
	\[ \sum_U |U| = (m-n+1) \kappa(G) \]
where $|U|$ denotes the length of the unique cycle of $U$, and the sum is over all spanning unicycles of~$G$.  Let 
	\[ \EE |U| = \frac{1}{u(G)} \sum_U |U| \]
be the expected length of the cycle in a uniform spanning unicycle of~$G$.  Then
	\begin{equation} \label{uoverkappa} \frac{u(G)}{\kappa(G)} = \frac{m-n+1}{\EE |U|}. \end{equation}
	
\section{Proof of Theorem~\texorpdfstring{\ref{thm:main}}{1}}

Let $V_1 \subset V_2 \subset \cdots$ be a standard exhaustion of $\Z^d$, and let $G_n$ be the multigraph obtained by collapsing $V_n^c$ to a single vertex $s_n$ and removing self-loops at $s_n$.
The last ingredient we need is a lemma saying that the bulk average sandpile height is the same as the expected height at the origin, in the large~$n$ limit.  This follows from the main result of Athreya and J\'{a}rai~\cite[Theorem~1]{AJ} on the infinite-volume limit of the stationary distribution of the abelian sandpile model.

\begin{lemma}
\label{lem:localglobal}
Let $\sigma_n$ be a uniform random recurrent sandpile on $G_n$.  Then
	\begin{equation} \label{eq:localglobal} \lim_{n \to \infty} \frac{1}{n } \sum_{x \in V_n} \EE \sigma_n(x) = \lim_{n \to \infty} \EE \sigma_n(o). \end{equation}
\end{lemma}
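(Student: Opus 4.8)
The plan is to reduce \eqref{eq:localglobal} to the translation-invariance of the infinite-volume sandpile measure on $\Z^d$ constructed by Athreya and J\'arai. First I would observe that the right-hand side is immediate from their main theorem: since $V_1 = \{o\}$ and $\# V_n = n$ with $\#(\partial V_n)/n \to 0$, the sequence $G_n$ is an admissible exhaustion in the sense of \cite{AJ}, so the law of $\sigma_n$ restricted to any fixed finite window around $o$ converges to the corresponding marginal of the infinite-volume stationary measure $\nu$ on $\Z^d$. In particular $\EE\sigma_n(o) \to \EE_\nu \sigma(o)$, a finite constant (indeed $\EE_\nu\sigma(o) \leq 2d-1$ since heights are bounded). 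So it remains to show the Ces\`aro-type average on the left converges to the same limit $\EE_\nu\sigma(o)$.

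The key step is to compare $\EE\sigma_n(x)$ to $\EE_\nu\sigma(o)$ for a typical $x \in V_n$. For each fixed $r$, let $B_r(x)$ denote the $\ell^\infty$-ball of radius $r$ about $x$. If $B_r(x) \subset V_n$, then by translation-invariance of $\nu$ and the local convergence theorem of \cite{AJ} applied at the point $x$ (using that $V_n$ contains a large neighborhood of $x$), $\EE\sigma_n(x)$ is within $\epsilon(r,n)$ of $\EE_\nu\sigma(o)$, where $\epsilon(r,n) \to 0$ as $n \to \infty$ for $r$ fixed and then $\epsilon(r,\cdot)$-type bounds improve as $r \to \infty$; the heights being bounded by $2d-1$ controls the contribution of the remaining ``boundary-layer'' sites. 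Concretely, split $V_n = V_n^{\mathrm{int}} \sqcup V_n^{\mathrm{bdry}}$ where $V_n^{\mathrm{int}} = \{x \in V_n : B_r(x) \subseteq V_n\}$; then
\[
\left| \frac1n \sum_{x \in V_n} \EE\sigma_n(x) - \EE_\nu\sigma(o) \right|
\leq \frac1n \sum_{x \in V_n^{\mathrm{int}}} \left| \EE\sigma_n(x) - \EE_\nu\sigma(o)\right|
+ \frac{(2d-1)\, \# V_n^{\mathrm{bdry}}}{n}.
\]
The first term is at most $\sup_{x \in V_n^{\mathrm{int}}} |\EE\sigma_n(x) - \EE_\nu\sigma(o)|$, which I claim tends to $0$ as $n \to \infty$ for each fixed $r$ by a uniform version of the Athreya--J\'arai convergence, and the second term is bounded by $(2d-1)\#V_n^{\mathrm{bdry}}/n$. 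Letting $n \to \infty$ and then $r \to \infty$ yields the claim, provided $\#V_n^{\mathrm{bdry}}/n \to 0$; this last fact follows from the standard-exhaustion hypothesis $\#(\partial V_n)/n \to 0$ together with an isoperimetric estimate bounding $\#V_n^{\mathrm{bdry}}$ by a constant (depending on $d$ and $r$) times $\#(\partial V_n)$ plus lower-order terms, or alternatively by invoking directly that $\#V_n^{\mathrm{bdry}}/n \to 0$ is part of what \cite{AJ} require of an admissible exhaustion.

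The main obstacle is the uniformity in $x$ of the local convergence: the Athreya--J\'arai theorem as usually stated gives convergence of marginals at a fixed site as the graph exhausts $\Z^d$, but here $x$ moves with $n$. What is actually needed is that for any $\epsilon > 0$ and any $r$, there is $N$ such that for all $n \geq N$ and all $x$ with $B_{R(n)}(x) \subseteq V_n$ (where $R(n) \to \infty$ slowly), the height marginal of $\sigma_n$ at $x$ is within $\epsilon$ of that of $\nu$. This is true because the sandpile marginal at $x$ depends on $V_n$ only through a large but $n$-independent neighborhood of $x$ (via Wilson's algorithm / the burning bijection and the fact that the relevant spanning-tree statistics have rapidly decaying correlations), so one can transfer the fixed-site statement to the moving-site statement by translation; I would cite \cite{AJ} for the precise stochastic-domination and coupling estimates that make this rigorous, and remark that an identical argument underlies the proof of Lemma~\ref{lem:finitelooping}. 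Once this uniformity is in hand, the displayed inequality above closes the argument.
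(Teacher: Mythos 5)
The paper gives no argument here beyond the citation: Lemma~\ref{lem:localglobal} is asserted to follow from Athreya and J\'arai's theorem, and your write-up is a correct expansion of exactly that intended derivation (fix the right-hand limit by local weak convergence at $o$, then handle the bulk average by an interior/boundary split plus translation invariance). The one claim you should not leave as written is that the marginal of $\sigma_n$ at $x$ ``depends on $V_n$ only through a large but $n$-independent neighborhood of $x$'': the sandpile measure is not a finite-range function of the domain, and what is actually true (and what \cite{AJ} prove, since their convergence holds along the net of finite volumes) is that for every $\epsilon>0$ there is a finite $K \ni o$ such that $|\EE\sigma_V(o)-\EE_\nu\sigma(o)|<\epsilon$ for \emph{every} finite $V \supseteq K$; combined with translation invariance this yields precisely the uniformity over moving sites $x$ with $x+K \subseteq V_n$ that your decomposition requires. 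The exceptional set $\{x \in V_n : x+K \not\subseteq V_n\}$ then has size at most a constant (depending on $K$) times $\#(\partial V_n)=o(n)$ by the standard-exhaustion hypothesis, as you indicate, so the argument closes.
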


Let $D_n(o)$ be the number of neighbors of~$o$ which are descendants of~$o$ in the wired uniform spanning forest $F_n$ of $V_n$.  Note that wired spanning forests of $V_n$ are in bijection with spanning trees of $G_n$, so collapsing $\partial V_n$ to a single vertex converts $F_n$ into a uniform spanning tree of~$G_n$.  Let $\sigma_n(o)$ be the number of particles at the origin in a uniform recurrent sandpile on~$G_n$ with sink at~$s_n$.  For $k=0,1,\ldots,2d-1$ let
	\[ p_k = \PP(D_n(o) = k) \]
and
	\[ q_k = \PP(\sigma_n(o)=k). \]
By Lemma~\ref{conditionaluniformity} we have
	\[ q_k = \sum_{j=0}^{2d-1} \PP(\sigma_n(o)=k \,|\, D_n(o) = j) \PP(D_n(o) = j) = \sum_{j=0}^k \frac{p_j}{2d-j}. \]
In other words, the probabilities $p_k$ and $q_k$ are related by the linear system
	\begin{align*}
	q_0 &= \frac{p_0}{2d} \\
	q_1 &= \frac{p_0}{2d} + \frac{p_1}{2d-1} \\
	q_2 &= \frac{p_0}{2d} + \frac{p_1}{2d-1} + \frac{p_2}{2d-2} \\
	&\;\; \vdots \\
	q_{2d-1} &= \frac{p_0}{2d} + \frac{p_1}{2d-1} + \frac{p_2}{2d-2} + \cdots + \frac{p_{2d-2}}{2} + p_{2d-1}.
	\end{align*}
In particular, their expections are related by
	\begin{align*} \EE \sigma_n(o) = \sum_{k=1}^{2d-1} k q_k = \sum_{k=0}^{2d-1} k \sum_{j=0}^{k} \frac{p_j}{2d-j}. \end{align*}
Reversing the order of summation, we obtain 
	\begin{align*} \EE \sigma_n(o)	
		&= \sum_{j=0}^{2d-1} \frac{p_j}{2d-j} \left( \frac{ 2d(2d-1)}{2} - \frac{j (j-1)}{2} \right) \\
		&= \frac12 \sum_{j=0}^{2d-1} p_j (j+2d-1) \\
		&= \frac{\EE D_n(o) + 2d-1}{2}.
	\end{align*}	
Taking $n \to \infty$, we obtain by Lemma~\ref{lem:finitelooping}
	\[ \zeta = \frac{\xi + 2d -1}{2}. \]
	
Next we will use Corollary~\ref{tuttepartial} for the graph $G_n$ with sink at $s_n$.  Since $(V_n)_{n \geq 1}$ is a standard exhaustion, 
	\[ m-\delta = dn  - \#(\partial V_n) = (d-o(1)) n . \]  
Write $\bar{\zeta}_n := \bar{\zeta}(G_n)$.  By Lemma~\ref{lem:localglobal}, we have $\bar{\zeta}_n \to \zeta$ as $n \to \infty$, so Corollary~\ref{tuttepartial} gives
	\begin{align*} \lim_{n \to \infty} \frac{\frac{\partial T_n}{\partial y}(1,1)}{n  T_n(1,1)}  
	&= \lim_{n \to \infty} \left( \frac{\delta-m}{n } + \bar{\zeta}_n \right) \\
	&= - d + \zeta.
	\end{align*}
It follows that 
	\[ \tau = \frac{\xi - 1}{2}. \]

Finally, taking $G=G_n$ in (\ref{uoverkappa}), since $(m -n +1)/n  \to d-1$ as $n \to \infty$ we obtain
	\[ \lim_{n \to \infty} \EE |U_n| =  \lim_{N \to \infty} \frac{(m -n +1)\kappa(G_n)}{u(G_n)} = \lim_{n \to \infty} \frac{(d - 1)n  T_n(1,1)}{\frac{\partial T_n}{\partial y}(1,1)} = \frac{(d-1)}{\tau} \]
which gives 
	\[ \lambda = \frac{2d-2}{\xi-1}. \]
This completes the proof of Theorem~\ref{thm:main}.

\section{Concluding remarks}

A curious feature of Theorem~\ref{thm:main} is that the expected length of the cycle in a uniform spanning unicycle of $V_n \subset \Z^d$ remains bounded as $n\to \infty$.  This is not a special feature of subgraphs of $\Z^d$: on any finite graph, the expected length of the cycle in the USU can be bounded just in terms of the maximum degree and the maximum local girth.
Let $G$ be a finite graph of maximum degree $d$ in which every edge lies on a cycle of length at most $g$.  If $T$ is a uniform spanning tree of $G$ and $e$ is a random edge not in $T$, then $T \cup \{e\}$ is a random spanning unicycle of $G$; call this object a UST$^+$.  A spanning unicycle whose cycle has length $k$ can be written as $T \cup \{e\}$ for $k$ different pairs $(T,e)$.  Hence, the UST$^+$ is obtained from the USU via biasing by the length of the cycle: letting $q_k$ (respectively~$p_k$) be the probability that the cycle in the UST$^+$ (respectively USU) has length $k$, we have $q_k = kp_k / \sum_j j p_j$.  By Wilson's algorithm~\cite{Wilson}, the probability that the cycle in the UST$^+$ has length at most $g$ is at least $1/ (d(d-1)^{g-2})$.   
Hence
	\[ \frac{\sum_{k \leq g} k p_k}{\sum_j j p_j} \geq \frac{1}{d(d-1)^{g-2}}. \]
The expected length of the cycle in the USU is  $\sum_j j p_j$, which is at most $gd(d-1)^{g-2}$.

It would be interesting to investigate the looping constant of graphs other than $\Z^d$.  Kenyon and Wilson \cite{KW11} calculated the looping constants of the triangular and hexagonal lattices and found that they too are rational numbers: $\frac53$ and $\frac{13}{12}$ respectively.  We expect that Theorem~\ref{thm:main} remains true if $\Z^d$ is replaced by any transitive amenable graph.  In this setting, the variable $d$ in equations \eqref{eq:theformulas} should be replaced by half the common degree of all vertices.

\section*{Acknowledgements}

We thank Richard Kenyon and David Wilson for helpful conversations.

\bibliographystyle{halpha}
\bibliography{fivefourths}

\end{document}